\documentclass[12pt, reqno]{amsart}
\usepackage{mathrsfs, amsmath, amsthm, amscd, amsfonts, amssymb}
\usepackage{bm}

\newtheorem{theorem}{Theorem}[section]

\newtheorem{proposition}[theorem]{Proposition}

\numberwithin{equation}{section}

\newcommand{\vp}{\varphi}

\newcommand{\D}{\mathbb{D}}

\newcommand{\T}{\mathbb{T}}

\newcommand{\N}{\mathbb{N}}
\newcommand{\C}{\mathbb{C}}

\newcommand{\IN}{\mathbb{N}}

\begin{document}

\setcounter{page}{1}


\title[Fej\'er-Rogosinski theorem for the Neil Algebra]{Fej\'er-Rogosinski theorem for the Neil Algebra}

\author[Das]{Nilanjan Das}
\address{Indian Statistical Institute, Statistics and Mathematics Unit, 8th Mile, Mysore Road, Bangalore, 560059,
India}
\email{nilanjand7@gmail.com}

\author[Sarkar]{Jaydeb Sarkar}
\address{Indian Statistical Institute, Statistics and Mathematics Unit, 8th Mile, Mysore Road, Bangalore, 560059,
India}
\email{jay@isibang.ac.in, jaydeb@gmail.com}

\subjclass[2020]{30A10, 30B10, 30H05, 30J15, 46E15}

\keywords{Power series, Neil algebra, inner functions.}

\begin{abstract}
Partial sums of the Taylor expansions of functions from the Neil algebra are uniformly bounded by their uniform norms on the ball of radius $r$ centered at the origin, where $r$ denotes the unique real root of
\[
4x^3 + x - 2 = 0.
\]
Moreover, this radius $r$ is optimal.
\end{abstract}

\maketitle

\tableofcontents

\section{Introduction}\label{sec intro}

Let $\D$ denote the open unit disc in the complex plane $\C$, and let $\T=\partial\D$ denote its boundary, the unit circle. Given an analytic function $f$ on $\D$ (throughout this paper, all functions are assumed to be scalar-valued) with Taylor expansion
\[
f(z)=\sum_{n=0}^\infty a_n z^n,
\]
and a natural number $N \in \N$, we define the $N$-th partial sum $S_N(f)$ by
\[
(S_N(f))(z)=\sum_{n=0}^N a_n z^n.
\]
Denote by $H^\infty(\D)$ the Banach algebra of all bounded analytic functions on $\D$, equipped with the supremum norm:
\[
H^\infty(\D) = \{f \in \text{Hol}(\D): \|f\|_\infty:=\sup_{z\in\D}|f(z)|<\infty\}.
\]
There exist functions $f\in H^\infty(\D)$ for which the partial sums $(S_N(f))(z)$ can become arbitrarily large at certain points $z\in\T$ (cf. \cite{Fejer 2}). However, independently, Fej\'er and Rogosinski \cite{Fejer, Rogo} (also see Landau and Gaier \cite{LG}) showed that for all $f\in H^\infty(\D)$,
\begin{equation}\label{Intro-2}
\sup_{z\in\T} \left|(S_N(f))\left(\frac{1}{2}z\right)\right|\leq \|f\|_\infty,
\end{equation}
for all $N \in\N$. Moreover, the constant $\frac{1}{2}$ cannot be replaced by a larger number. In the context of Fej\'er-Rogosinski theorem on the general Hardy spaces and Banach spaces, see Aizenberg, Elin, and Shoikhet \cite{Aizenberg}, and Nabetani \cite{Nabetani}.

Our aim in this paper is to investigate the Fej\'er-Rogosinski theorem for a concrete closed subalgebra of $H^\infty(\D)$, namely, the Neil algebra, and to derive a sharper radius than $\frac{1}{2}$. The \textit{Neil algebra} $H^\infty_1(\D)$ is defined by
\[
H^\infty_1(\D):=\{f\in H^\infty(\D): f^\prime(0)=0\}.
\]
This is one of the simplest constrained algebras that serves as a test space for theories such as the interpolation problem, the corona theorem, the commutant lifting theorem, and many others in algebras beyond $H^\infty(\D)$ (cf. \cite{Scott, Davidson}). Yet, there is limited understanding regarding this function space. The main result of this paper is as follows:

\begin{theorem}\label{Neil}
Let $r_0$ denote the unique real root of the equation
\[
4r^3+r-2=0.
\]
Then, for all $f\in H^\infty_1(\D)$ and $N\in\IN$,
\[
\sup_{z\in\T}\left|(S_N(f))(r_0z)\right|\leq \|f\|_\infty.
\]
Moreover, the constant $r_0$ cannot be replaced by a larger number.
\end{theorem}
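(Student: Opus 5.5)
The plan is to reduce the inequality to a weighted $L^1$ extremal problem by duality, and then to prove sharpness by testing a small partial sum against explicit inner functions in $H^\infty_1(\D)$.

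\textbf{Reduction.} Fix $N$, $r\le r_0$ and $z\in\T$. By rotation invariance of $H^\infty_1(\D)$ ($f\mapsto f(\lambda\cdot)$ preserves $f'(0)=0$) it suffices to take $z=1$. The functional $\Lambda_N(f)=(S_N f)(r)=\sum_{n=0}^N a_n r^n$ is given on boundary values by
\[
\Lambda_N(f)=\frac1{2\pi}\int_0^{2\pi} f(e^{it})\,K(e^{-it})\,dt,\qquad K(\zeta)=1+c\,\zeta+\sum_{n=2}^N r^n\zeta^n .
\]
Here the coefficient $c$ is completely free, because $a_1=0$ for every $f\in H^\infty_1(\D)$. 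For the same reason we may also add to $K$ any term $\bar\zeta^k$ with $k\ge1$, i.e.\ any element of $\overline{\zeta H^1}$ in the variable $\zeta=e^{-it}$. Hence
\[
|\Lambda_N(f)|\le \|f\|_\infty \inf_{c,\,g}\|K+g\|_{L^1(\T)} .
\]
It therefore suffices to exhibit, for each $N$, a \emph{nonnegative} trigonometric polynomial $P$ with the following three properties:
\begin{itemize}
\item its mean is $\widehat P(0)=1$;
\item its Fourier coefficients satisfy $\widehat P(n)=r^n$ for $2\le n\le N$;
\item $\widehat P(1)$ is arbitrary and $\widehat P(n)=0$ for $n>N$.
\end{itemize}
Such a $P$ has $\|P\|_1=\widehat P(0)=1$, which gives the bound.

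\textbf{Construction of $P$.} I would look for $P=|p|^2$ with $p$ an analytic polynomial of $\ell^2$-norm one, or for a sum of two such squares. This mirrors the classical Fej\'er--Rogosinski argument, where $\widehat P(1)=r$ is forced and this yields $r=1/2$. Here the extra freedom in $\widehat P(1)$ lets $r$ grow.

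A warm-up comparison explains why this freedom is needed. The coefficient bound $|a_n|\le 1-|a_0|^2$ gives only $r^2/(1-r)\le1/2$, which is $r=1/2$ again, so it is not enough. For $N=2$ the sharp condition is $|a_0|+|a_2|r^2\le1$, which holds iff $r^2\le 1/2$. This suggests that small $N$ (in particular $N=3$) is the binding case and that larger $N$ are easier because of the geometric tail.

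Concretely, I would first solve the $N=3$ problem, finding $c$ and a Fej\'er--Riesz factorization for which positivity is exactly equivalent to $4r^3+r-2\le0$. I would then show for general $N$ that the tail $\sum_{n\ge4} r^n\zeta^n$ can be absorbed. One route is to write $S_N$ as $S_3$ plus a combination of partial sums of $f(z)-a_0-a_2z^2-a_3z^3$, using Schur-type estimates for the remaining coefficients. The other route is to build $P$ recursively, for example with $p=\alpha+\beta\zeta+\sum_{k\ge2}\gamma_k\zeta^k$ where the $\gamma_k$ are chosen geometrically.

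\textbf{Sharpness.} I would take $N=3$ and consider the functional $a_0+a_2r^2+a_3r^3$ on the unit ball of $H^\infty_1(\D)$. The test functions would be two-zero Blaschke products $f=\lambda\, b_\alpha b_\beta$ with $\alpha,\beta$ constrained so that $f'(0)=0$, together with $z^2 b_\alpha$ and their limits. Maximizing $|a_0|+r^2|a_2|+r^3|a_3|$ over the free parameter, with the coefficients aligned in phase, should exceed $1$ precisely when $4r^3+r-2>0$. This would show that $r_0$ cannot be enlarged.

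The main obstacle is the uniform-in-$N$ construction of the positive kernel, or equivalently controlling all $N\ge4$ once the case $N=3$ is settled.
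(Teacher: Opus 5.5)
Your duality reduction is correct, and in fact sharp. Your $N=3$ claim is also true. Write $x=\cos\theta$. The admissible kernels are $P=2Q(\cos\theta)$ with $Q(x)=\tfrac12-r^2+dx+2r^2x^2+4r^3x^3$, where $d=c-3r^3$ is free. Take $Q(x)=4r^3(x+1)\bigl(x-\tfrac{2r-1}{4r}\bigr)^2\ge 0$. It has the correct $x^2$- and $x^3$-coefficients for every $r$, and the correct constant term exactly when $4r^3+r=2$. This is the dual counterpart of the paper's Schwarz--Pick estimate for $\varphi=(f-a_0)/(z^2(1-\overline{a_0}f))$.

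The genuine gap is $N\ge 4$, which you leave open. The missing observation is that no new kernel is needed there. Take $\widehat P(1)=r$, i.e.\ the classical kernel $1+2\sum_{n=1}^N r^n\cos n\theta$. It is nonnegative for $r\le r_N$, the Schur--Szeg\"o radius for all of $H^\infty(\D)$. For $N\ge4$ one has $r_N\ge r_4\approx 0.694>r_0\approx 0.6894$ by the monotonicity of $r_N$ (Schur--Szeg\"o), which finishes the case. This is exactly the paper's step $R_N\ge r_N\ge r_4$. The margin $r_4-r_0\approx 0.005$ is far too thin for termwise coefficient bounds on the tail; you noted yourself that such bounds only give $\tfrac12$.

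Your sharpness step fails as designed.
\begin{itemize}
\item If $f=\lambda b_\alpha b_\beta$ with $f'(0)=0$, then $(1-|\alpha|^2)\beta=-\alpha(1-|\beta|^2)$ forces $|\beta|=|\alpha|$ and hence $\beta=-\alpha$. So $f=\lambda\frac{z^2-\alpha^2}{1-\bar\alpha^2 z^2}$ is even. Then $a_3=0$, and $|a_0|+r^2|a_2|=|\alpha|^2+r^2(1-|\alpha|^4)$ exceeds $1$ only when $r>1/\sqrt2$.
\item For $z^2b_\alpha$ one has $a_0=0$ and $\max_{|\alpha|}\bigl(r^2|\alpha|+r^3(1-|\alpha|^2)\bigr)=r^3+\tfrac r4$. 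This exceeds $1$ only when $4r^3+r>4$.
\item Limits of these families add nothing, so none of them detects $r_0$.
\end{itemize}
What is needed is $|a_0|\to1$, so that the factor $1+|a_0|$ in the paper's estimate tends to $2$. This must be combined with a Schwarz--Pick extremal inner factor $\varphi$ with $\varphi_0=\tfrac{1}{2r_0}$. That gives a M\"obius image of $z^2b$, not a product: the paper uses $\frac{z^2b(z)+\alpha}{1+\alpha z^2 b(z)}$ with $b(z)=\frac{z+1/(2r_0)}{1+z/(2r_0)}$ and $\alpha\to1^-$.

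Alternatively, you can stay inside your dual framework. Suppose $4r^3+r>2$ and set $x_r=\tfrac{2r-1}{4r}$. Take the mean-zero probability measure on $[-1,1]$ with mass $\tfrac{x_r}{1+x_r}$ at $-1$ and mass $\tfrac{1}{1+x_r}$ at $x_r$. It gives $\int Q=\tfrac{2-4r^3-r}{4}<0$ for every $d$, so no admissible nonnegative kernel exists. The $L^1$ distance in your reduction is attained (weak-$*$ compactness plus the F.\ and M.\ Riesz theorem). Any minimizer of norm $1$ and mean $1$ would be such a kernel. Hence the functional $f\mapsto (S_3f)(r)$ has norm strictly greater than $1$ on $H^\infty_1(\D)$.
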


The next section, Section \ref{sec: Neil}, contains the proof of this result. A remark on $r_0$: Numerically (see the proof of Theorem \ref{thm: main} in Section \ref{sec: Neil}),
\[
r_0 \approx 0.6894.
\]
In particular,
\[
r_0 > \frac{1}{2}.
\]
This shows that seeking radii larger than $\frac{1}{2}$ for subalgebras of $H^\infty(\D)$ is a fruitful problem. To our knowledge, this is the first such attempt in the literature. On the other hand, in the third and final section, we show that this type of improvement need not hold in general for all subalgebras or subsets of $H^\infty(\D)$.

\section{On the Neil algebra}\label{sec: Neil}

The purpose of this section is to prove the main result of the paper. We begin by recalling another classical result, namely, a more general version of the Fej\'er-Rogosinski theorem due to Schur and Szeg\"o \cite{Schur-Szego}. For a given $N\in\IN$, denote by $r_N$ the largest number such that
\[
\frac{1}{2}+\sum_{n=1}^Nr^n\cos n\theta\geq 0,
\]
for all $r\leq r_N$. Let $N_0 \in \N$. The result of Schur and Szeg\"o \cite{Schur-Szego} states that $r_{N_0}$ is, in fact, the largest possible number such that
\[
\sup_{z\in\T}\left|(S_N(f))(r_{N_0}z)\right|\leq \|f\|_\infty,
\]
for all $N\geq N_0$ and all $f\in H^\infty(\D)$. Moreover, the sequence $\{r_N\}_{N=1}^\infty$ is monotonically increasing. Specifically,
\[
r_1= \frac{1}{2},
\]
and also,
\[
r_2=\sqrt{\frac{3}{8}} \approx 0.6123,\quad r_3\approx0.6478, \quad r_4\approx0.694,
\]
and so on.

The following coefficient estimate for power series, due to Wiener \cite[p. 4]{Bohr}, will be used frequently. Let $f \in H^\infty(\D)$ satisfy $\|f\|_\infty \leq 1$, and suppose that its Taylor series expansion is
\[
f(z) = \sum_{n=0}^{\infty} a_n z^n.
\]
A well-known method of Wiener \cite[p. 4]{Bohr} yields
\begin{equation}\label{eqn: a2 leq 1-a0}
|a_2| \leq 1 - |a_0|^2.
\end{equation}
To see this, define
\[
g(z) = \frac{f(z)+f(-z)}{2},
\]
for all $z \in \D$. Since $f:\mathbb{D}\to\mathbb{D}$ is analytic, it follows that $g$ is also an analytic self-map of $\D$. As
\[
g(z) =\sum_{n=0}^\infty a_{2n}z^{2n},
\]
for all $z \in \D$, replacing $z^2$ with $z\in\mathbb{D}$ shows that the function
\[
z \mapsto \sum_{n=0}^\infty a_{2n}z^n,
\]
is again an analytic self-mapping of $\mathbb{D}$. By the well-known Schwarz–Pick inequality (in its derivative form), $|a_2|\leq 1-|a_0|^2$, which proves the claim.

We now proceed to prove the main result, namely, Theorem \ref{Neil}. Building on the above, we introduce the following notation: for each $n \in \N$, let $R_n$ denote the largest number such that
\[
\sup_{z\in\T}\left|(S_n(f))\left(R_n z\right)\right|\leq\|f\|_\infty,
\]
for all $f\in H^\infty_1(\D)$. In the context of the main result, it is clear that we need to compute $\min_{n\in\IN} R_n$. In the following proof, we show that
\[
\min_{n\in\IN} R_n = R_3 = r_0.
\]
For the reader’s convenience, we restate it below. Recall that $r_0$ denotes the unique real root of the equation
\[
4r^3+r-2=0.
\]
Note that the existence and uniqueness of $r_0$ will also be established as part of the proof of the theorem below.

\begin{theorem}\label{thm: main}
For every $f\in H^\infty_1(\D)$ and $N\in\IN$,
\[
\sup_{z\in\T}\left|(S_N(f))(r_0z)\right|\leq \|f\|_\infty.
\]
Moreover,
\[
r_0 = R_3,
\]
and the constant $r_0$ cannot be replaced by a larger number.
\end{theorem}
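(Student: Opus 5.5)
The plan is to split according to $N$. For $f\in H^\infty_1(\D)$ with $\|f\|_\infty\le 1$ we have $a_1=0$, so $S_1(f)=a_0$ and $|S_1(f)|\le 1$ trivially. For $N\ge 4$ we invoke the Schur--Szeg\"o theorem recalled above: it gives $\sup_{z\in\T}|(S_N(f))(r_4 z)|\le \|f\|_\infty$ for all $N\ge 4$ and all $f\in H^\infty(\D)$, and $r_4\approx 0.694>r_0$. Since $S_N(f)$ is a polynomial, the maximum principle lets us pass from radius $r_4$ to the smaller radius $r_0$. Hence only $N=2$ and $N=3$ require work, and we expect the whole difficulty, including the determination of $r_0$, to sit in $N=3$.

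For $N=2$ we use \eqref{eqn: a2 leq 1-a0}. With $a=|a_0|$,
\[
|(S_2(f))(rz)|\le a+(1-a^2)r^2 ,
\]
and this is at most $1$ exactly when $(1+a)r^2\le 1$. So the case $N=2$ holds for every $r\le 1/\sqrt2\approx 0.707$, which exceeds $r_0$.

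For $N=3$ we need a joint bound on $a_2$ and $a_3$, obtained by the first steps of the Schur algorithm. Write $a=a_0$ and set
\[
\varphi=\frac{f-a}{1-\bar a f}.
\]
This is a self-map of $\D$ with $\varphi(0)=0$. Because $a_1=0$ we also get $\varphi'(0)=0$, so Schwarz's lemma gives $\varphi=z^2h$ with $h=\sum b_nz^n$ a self-map of $\D$. Inverting,
\[
f=\frac{a+z^2h}{1+\bar a z^2h}=a+(1-|a|^2)\,z^2h+O(z^4),
\]
so $a_2=(1-|a|^2)b_0$ and $a_3=(1-|a|^2)b_1$. Schwarz--Pick gives $|b_1|\le 1-|b_0|^2$. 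Writing $t=|b_0|$, this yields
\[
|(S_3(f))(rz)|\le |a|+(1-|a|^2)\bigl(t r^2+(1-t^2)r^3\bigr).
\]
Maximizing in $t\in[0,1]$: for $r>1/2$ the optimum is $t=1/(2r)$, with maximal value $r^3+r/4$. The condition $|a|+(1-|a|^2)(r^3+r/4)\le 1$ is equivalent to $(1+|a|)(r^3+r/4)\le 1$. Its worst case is $|a|\to 1$, where it becomes $4r^3+r-2\le 0$. The function $4r^3+r-2$ is strictly increasing and changes sign on $(1/2,1)$, which gives the existence and uniqueness of $r_0$, numerically about $0.6894$. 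It also gives $R_3\ge r_0$.

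For sharpness, which proves both $R_3\le r_0$ and optimality in the theorem, we take $a\in(0,1)$ close to $1$, $b_0=1/(2r)$, and $h$ the M\"obius map $h(z)=(z+b_0)/(1+b_0z)$. Then $b_1=1-b_0^2>0$ and $f=(a+z^2h)/(1+az^2h)\in H^\infty_1(\D)$ with $\|f\|_\infty\le 1$. All the relevant coefficients are positive, so
\[
(S_3(f))(r)=a+(1-a^2)\bigl(r^3+r/4\bigr).
\]
For any $r>r_0$ we have $2(r^3+r/4)>1$, and this value exceeds $1$ once $a$ is close enough to $1$. The main obstacle is the $N=3$ estimate, specifically obtaining the coupled bound on $(a_2,a_3)$ from the Schur reduction rather than estimating them separately. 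We note that $r_0<r_4$ is needed for the $N\ge 4$ step and holds only narrowly, so the numerical comparison must be checked with enough precision.
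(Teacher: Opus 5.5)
Your proposal is correct and follows essentially the paper's proof. You use the same Schur-step reduction $f=(a_0+z^2h)/(1+\overline{a_0}z^2h)$ (the paper's $\vp$ is your $h$), the Wiener/Schwarz--Pick bound $|h'(0)|\le 1-|h(0)|^2$, and the maximization at $t=1/(2r)$ that yields $4r^3+r\le 2$. Your extremal function $(a+z^2h)/(1+az^2h)$ with $h$ a real M\"obius map is the paper's, as is the Schur--Szeg\"o input $r_4>r_0$ for $N\ge 4$. The only cosmetic differences are that you tune $h(0)=1/(2r)$ to the test radius $r>r_0$, whereas the paper fixes $1/(2r_0)$ and tests at $r_0+\epsilon$, and that you skip the sharpness of $R_2=1/\sqrt{2}$, which the theorem does not need.
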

\begin{proof}
The inequality is trivial for constant functions $f$ in $H^\infty_1(\D)$. Therefore, in what follows, we always assume that the functions $f$ considered in $H^\infty_1(\D)$ are nonconstant. We also note that it suffices to prove the stated inequality for functions satisfying $\|f\|_\infty = 1$, since the general case then follows by a simple scaling argument. Thus, without loss of generality, we assume throughout that
\[
\|f\|_\infty=1.
\]
For an arbitrary $f \in H^\infty_1(\D)$ with $\|f\|_\infty = 1$, write its power series representation on $\D$ as
\[
f(z) = \sum_{n=0}^{\infty} a_n z^n.
\]
We know that
\[
a_1 = f'(0) = 0.
\]
This implies
\[
R_1=1.
\]
Next, we note that
\[
|a_0| < 1.
\]
Indeed, if $|a_0| = |f(0)| = 1$, then by the maximum modulus theorem, $f$ must be a constant function. To compute $R_2$, we recall from \eqref{eqn: a2 leq 1-a0} that $|a_2| \leq 1 - |a_0|^2$, and hence, for $z\in\T$, we have
\[
\begin{split}
\left|(S_2(f)) \left(\frac{z}{\sqrt{2}}\right)\right| & = \left|\sum_{n=0}^2 a_n\left(\frac{z}{\sqrt{2}}\right)^n\right|
\\
& \leq |a_0| + \frac{1}{2} |a_2|
\\
& \leq |a_0| + \frac{1}{2} (1-|a_0|^2)
\\
&
= 1 - \frac{1}{2} (1 - |a_0|)^2
\\
& \leq 1,
\end{split}
\]
which yields
\[
R_2 \geq \frac{1}{\sqrt{2}}.
\]
We claim that $R_2 = \frac{1}{\sqrt{2}}$. To this end, for each $\alpha\in(0,1)$, we consider the function $b_\alpha \in H^\infty_1(\D)$ defined by
\[
{b}_\alpha(z)=\frac{z^2+\alpha}{1+\alpha z^2}.
\]
Note that $(S_2(b_\alpha))(z) = \alpha + (1 - \alpha^2) z^2$ and $\|b_\alpha\|_\infty = 1$. Therefore, for each $r \in (0,1)$, we have
\[
|(S_2(b_\alpha)) (r)| = \alpha + (1-\alpha^2)r^2,
\]
and hence
\[
|(S_2(b_\alpha)) (r)| \leq 1,
\]
if and only if (note that $\alpha < 1$)
\[
r\leq \frac{1}{\sqrt{1+\alpha}}.
\]
Since
\[
\frac{1}{\sqrt{1+\alpha}}\to\frac{1}{\sqrt{2}},
\]
as $\alpha\to 1-$, it follows that
\[
R_2=\frac{1}{\sqrt{2}}.
\]
As the next step, we compute $R_3$. Consider the analytic self-map $\vp$ of $\D$ defined by (recall that $f'(0) = 0$)
\[
\vp(z)=\frac{f(z)-a_0}{z^2(1-\overline{a_0}f(z))}.
\]
Write the Taylor series expansion of $\vp$ on $\D$ as
\[
\vp(z)=\sum_{n=0}^\infty \vp_nz^n.
\]
Since
\[
f(z) = a_0 + z^2\vp(z)(1-\overline{a_0}f(z)),
\]
it follows that
\[
(S_3(f))(z)=a_0+z^2(1-|a_0|^2)((S_1(\vp))(z)).
\]
By \eqref{eqn: a2 leq 1-a0}, we know that $|\vp_1|\leq1-|\vp_0|^2$. This gives, for $r>0$ and $z \in \T$,
\[
|(S_1(\vp))(rz)| = |\vp_0 + \vp_1 rz| \leq |\vp_0| + (1 - |\vp_0|^2)r,
\]
and hence
\[
|(S_3(f))(rz)| \leq |a_0| + r^2(1-|a_0|^2)(|\vp_0|+(1-|\vp_0|^2)r).
\]
Observe that
\[
|a_0|+r^2(1-|a_0|^2)(|\vp_0|+(1-|\vp_0|^2)r)\leq 1,
\]
is equivalent to the condition that
\[
r^2(1-|a_0|^2)(|\vp_0|+(1-|\vp_0|^2)r)\leq 1 - |a_0|.
\]
Since $(1-|a_0|^2) = (1+|a_0|)(1-|a_0|)$ and $|a_0| < 1$, the above is equivalent to
\[
r^2(1 + |a_0|)(|\vp_0|+(1-|\vp_0|^2)r)\leq 1.
\]
Finally, since $1 + |a_0| < 2$, we have
\[
r^2(1 + |a_0|)(|\vp_0|+(1-|\vp_0|^2)r) \leq 2 r^2 (|\vp_0|+(1-|\vp_0|^2)r).
\]
In particular,
\[
|(S_3(f))(rz)| \leq 1,
\]
whenever
\[
2 r^2 (|\vp_0|+(1-|\vp_0|^2)r) \leq 1.
\]
To estimate the quantity $|\vp_0|+(1-|\vp_0|^2)r$, we pick $r\in (\frac{1}{2}, 1)$, and define $\psi_r(x) :(0,1)\to(0, \infty)$ by
\[
\psi_r(x)=x+(1-x^2)r.
\]
Since $\psi_r^\prime(x)=1-2xr$, it follows that
\[
\psi_r^\prime(x) = 0,
\]
whenever $x = \frac{1}{2r}\in (\frac{1}{2}, 1)$. Moreover, $\psi$ is increasing on $(\frac{1}{2}, \frac{1}{2r})$ and decreasing on $(\frac{1}{2r}, 1)$. Therefore,
\[
\sup_{x\in (0,1)} \psi_r(x) = \psi_r \left(\frac{1}{2r}\right) = r + \frac{1}{4r},
\]
and hence
\[
2r^2(|\vp_0|+(1-|\vp_0|^2)r)\leq 2r^3+\frac{r}{2}.
\]
Thus, we conclude that $|(S_3(f))(rz)|\leq1$ for $r\in (\frac{1}{2},1)$ and $z \in \T$ whenever $2r^3 + \frac{r}{2} \leq 1$, that is,
\[
4r^3+r\leq 2.
\]
This inequality is satisfied for $r\leq r_0$, where $r_0$ is the unique real root of
\[
p(r):=4r^3+r-2=0,
\]
in the interval $(\frac{1}{2}, 1)$. Note that
\[
p\Big(\frac{1}{2}\Big)<0 \text{ and } p(1)>0,
\]
and
\[
p^\prime(r)>0,
\]
for all $r\in \mathbb{R}$, which guarantees the existence and uniqueness of $r_0$. Now, from the Fej\'er-Rogosinski theorem \cite{Fejer, Rogo}, we already know that $R_3\geq \frac{1}{2}$, and hence $|(S_3(f))(rz)|\leq1$ is valid for all $r\leq r_0$, that is,
\[
R_3\geq r_0.
\]
To show that equality holds, we pick $\alpha\in (0,1)$ and consider the function ${b}_{\alpha, r_0}$ on $\D$, where
\[
{b}_{\alpha, r_0}(z) = \frac{z^2(2r_0z+1)+\alpha(2r_0+z)}{\alpha z^2(2r_0z+1)+2r_0+z}.
\]
Given that $\frac{1}{2r_0} \in (\frac{1}{2},1)$, define
\[
b(z) = \frac{z + \frac{1}{2r_0}}{1 + \frac{z}{2 r_0}},
\]
for all $z \in \D$. Then
\[
b_{\alpha, r_0}(z) = \frac{z^2b(z) + \alpha}{1 + \alpha z^2b(z)}.
\]
This, in particular, shows that ${b}_{\alpha, r_0}(z)$ is an analytic self-mapping of $\D$. Note that $b_{\alpha, r_0}(1) = 1$, which implies $\|b_{\alpha, r_0}\|_\infty = 1$. Moreover, we have
\[
{b}_{\alpha, r_0}(z) = \alpha + (1 - \alpha^2) z^2 b(z) - \alpha (1 - \alpha^2) z^4 b^2(z) + \cdots.
\]
On the other hand, since
\[
\alpha + (1 - \alpha^2) z^2 b(z) = \alpha + (1 - \alpha^2) z^2 \Big(\frac{1}{2r_0} + \Big(1 - \frac{1}{4r_0^2}\Big) z - \cdots \big),
\]
it follows that
\[
(S_3(b_{\alpha, r_0}))(r)=\alpha+(1-\alpha^2)r^2\left(\frac{1}{2r_0}+\left(1-\frac{1}{4r_0^2}\right)r\right),
\]
for any $r>0$. Moreover, for any $\epsilon>0$, we get
\[
\begin{split}
\widetilde{r_0} : & = (r_0+\epsilon)^2\left(\frac{1}{2r_0}+\left(1-\frac{1}{4r_0^2}\right)(r_0+\epsilon)\right)
\\
& > r_0^2\left(\frac{1}{2r_0}+\left(1-\frac{1}{4r_0^2}\right)r_0\right)
\\
& = \frac{p(r_0)+2}{4}
\\
& =1/2,
\end{split}
\]
that is, $\frac{1}{\widetilde{r_0}}-1<1$. Let us now choose any
\[
\alpha\in\left(\max\left\{0, \frac{1}{\tilde{r_0}}-1\right\},1\right).
\]
For the above $\alpha$, we have
\[
\begin{split}
(S_3(b_{\alpha, r_0}))(r_0+\epsilon) = \alpha+(1-\alpha^2)\tilde{r_0} = \alpha+(1-\alpha)(1+\alpha)\tilde{r_0} >\alpha+(1-\alpha) =1.
\end{split}
\]
As a result, we conclude that
\[
R_3=r_0.
\]
Note now that $p(1/\sqrt{2})>0$, and therefore,
\[
R_2> r_0=R_3\approx 0.6894.
\]
We now show that $R_N>R_3$ for all $N\geq 4$. To this end, we will use a result of Schur and Szeg\"{o} \cite{Schur-Szego} (see the beginning of this section). Since $H^\infty_1(\D)\subsetneq H^\infty(\D)$, using the monotonicity of $r_N$'s, we have
\[
R_N\geq r_N\geq r_4,
\]
for all $N\geq4$. Using the approximate value
\[
r_4\approx 0.694,
\]
as stated in \cite{Schur-Szego}, we immediately conclude that $r_4>R_3 \approx 0.6894$. This now asserts that
$$
\min_{N\in\IN}R_N=\min\{R_1, R_2, R_3, r_4\}=R_3=r_0,
$$
and our proof is therefore complete.
\end{proof}

\section{On $H^\infty_0(\D)$}

One of the purposes of this paper is to sharpen the Fej\'er–Rogosinski theorem in the context of subalgebras and subsets of $H^\infty(\D)$. The result for the Neil algebra shows that such a sharpening is indeed possible. In this section, we show that such an improvement does not necessarily hold in general. We investigate this question for the subalgebra $H^\infty_0(\D)$ of $H^\infty(\D)$, where
\[
H^\infty_0(\D):=\{f\in H^\infty(\D): f(z)\neq 0\,\,\mbox{for all}\,\,z\in\D\}.
\]
This subalgebra contains, in particular, the family of singular inner functions, a class of fundamental importance in analytic function theory and the theory of Hilbert function spaces.

\begin{proposition}\label{Nonzero}
For all $f\in H^\infty_0(\D)$, the inequality \eqref{Intro-2} remains valid for every $N\in\IN$, and this quantity $\frac{1}{2}$ is the best possible.
\end{proposition}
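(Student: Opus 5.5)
The inequality itself needs no new argument. Since $H^\infty_0(\D)\subset H^\infty(\D)$, the Fej\'er-Rogosinski theorem \eqref{Intro-2} applies directly to every $f\in H^\infty_0(\D)$ and every $N\in\IN$. The whole content of the proposition is therefore sharpness. We must show that for each $r>\frac12$ there is a zero-free $f$ with $\|f\|_\infty\le 1$ and a point $z\in\T$ with $|(S_N(f))(rz)|>1$ for some $N$. Following the Schur--Szeg\"o discussion, where $r_1=\frac12$ is the extremal radius for $N=1$, I would work with $N=1$ and test functions whose first two Taylor coefficients nearly saturate the classical bound $|a_1|\le 1-|a_0|^2$.

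The natural candidates are singular inner functions, which are zero-free. For $t>0$ I would take
\[
f_t(z)=\exp\Big(-t\,\frac{1+z}{1-z}\Big),
\]
which satisfies $\|f_t\|_\infty=1$ and $f_t(z)\neq 0$ on $\D$. Its first coefficients are $a_0=e^{-t}$ and $a_1=-2te^{-t}$, so
\[
|(S_1(f_t))(-r)|=e^{-t}(1+2tr).
\]
Writing $s=e^{-t}\to 1^-$ (that is, $t\to 0^+$), we have $t=-\log s\approx 1-s$, and hence
\[
e^{-t}(1+2tr)\approx s+2r(1-s)=1+(2r-1)(1-s).
\]
This exceeds $1$ whenever $r>\frac12$ and $1-s$ is small. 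To make this rigorous, I would expand $g(t)=e^{-t}(1+2rt)$ at $t=0$: we have $g(0)=1$ and $g'(0)=2r-1>0$, so $g(t)>1$ for all sufficiently small $t>0$.

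This settles the case $N=1$. If the statement is meant to give sharpness for every fixed $N$, the same family should work. One option is to take the angle $z=-1$ and let $t\to0$ at the level of the full partial sum, noting that $a_n=e^{-t}L_n(t)$-type coefficients are $O(t)$. A simpler route is to expand $(S_N(f_t))(-r)=e^{-t}\big(1+2tr\sum_{n=1}^N(-1)^{n+1}\ldots\big)$ to first order in $t$. Since $\frac{1+z}{1-z}=1+2\sum_{n\ge1}z^n$, the first-order term in $t$ of $S_N(f_t)(rz)$ is
\[
-t\Big(1+2\sum_{n=1}^N r^nz^n\Big).
\]
Hence
\[
|S_N(f_t)(rz)|=1-t\Big(1+2\sum_{n=1}^N r^n\cos n\theta\Big)+O(t^2).
\]
This exceeds $1$ for small $t$ exactly when $\frac12+\sum_{n=1}^N r^n\cos n\theta<0$ for some $\theta$, which is possible precisely when $r>r_N$.

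In this way the singular inner functions recover the Schur--Szeg\"o extremal radii, and in particular $\frac12=r_1$ for the uniform-in-$N$ statement. The main point to handle carefully is the uniformity of the $O(t^2)$ remainder for fixed $N$ and $r$. This is routine, because only finitely many coefficients are involved and each is a polynomial in $t$ times $e^{-t}$.
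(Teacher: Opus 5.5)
Your proof is correct and is essentially the paper's: the same singular inner functions $\exp\big(-t\frac{1+z}{1-z}\big)$, the same choice $N=1$, and the same evaluation at $-r$ giving $e^{-t}(1+2tr)$. Your check $g(0)=1$, $g'(0)=2r-1>0$ replaces the paper's explicit threshold $r\le (e^{t}-1)/(2t)\to\frac12$, and your first-order expansion for general $N$, though not needed here, is sound and shows that these zero-free functions already force the Schur--Szeg\"o radius $r_N$ for each fixed $N$.
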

\begin{proof}
Since $H^\infty_0(\D)\subsetneq H^\infty(\D)$, the validity of \eqref{Intro-2} is automatically guaranteed, and hence we only need to establish the optimality of $1/2$ for the functions in $H^\infty_0(\D)$. Pick $\alpha\in (0,1)$, and consider the singular inner function $f_\alpha \in H^\infty_0(\D)$ defined by
\[
f_\alpha(z)=\exp\left(\alpha\frac{z+1}{z-1}\right).
\]
It is known that $\|f_\alpha\|_\infty=1$. A little calculation shows that for any $r>0$,
\[
\left|(S_1(f_\alpha))(-r)\right| = e^{-\alpha}\left(1+2\alpha r\right).
\]
Therefore,
\[
\left|(S_1(f_\alpha))(-r)\right| \leq 1,
\]
if and only if
\[
r\leq \frac{e^\alpha-1}{2\alpha}.
\]
It is easy to see that
\[
\frac{e^\alpha-1}{2\alpha} > \frac{1}{2},
\]
for all $\alpha>0$, and
\[
\lim_{\alpha\to0+}\frac{e^\alpha-1}{2\alpha}=\frac{1}{2}.
\]
Clearly, for any $\epsilon>0$, there is $\alpha_0\in (0,1)$ such that
\[
\frac{1}{2}< \frac{e^{\alpha_0}-1}{2\alpha_0}<\frac{1}{2}+\epsilon.
\]
Consequently,
\[
\left|S_1\left(f_{\alpha_0}\right)\left(-\frac{1}{2}-\epsilon\right)\right| > \left|S_1\left(f_{\alpha_0}\right)\left(-\frac{e^{\alpha_0}-1}{2\alpha_0}\right)\right| = 1.
\]
This shows that the constant $\frac{1}{2}$ in the Fej\'er-Rogosinski theorem cannot be replaced by any larger number.
\end{proof}

The above result, together with Theorem \ref{Neil}, shows that the classical Fej\'er–Rogosinski theorem cannot be further improved for $H^\infty_0(\D)$, and in contrast, for $H^\infty_1(\D)$, the constant $\frac{1}{2}$ can be replaced by a larger number. A closer examination of the proof of the above proposition shows that, even when restricted to the family of singular inner functions, the constant $\frac{1}{2}$ in the Fej\'er–Rogosinski theorem cannot be improved.

\vspace{0.1in}

\noindent\textbf{Acknowledgement:}
The first named author is supported by the National Postdoctoral Fellowship (N-PDF) provided by the Anusandhan National Research Foundation, India (File number: PDF/2025/000089). The research of the second named author is supported in part by ANRF, Department of Science
\& Technology (DST), Government of India (File No: ANRF/ARGM/2025/000130/MTR).

\end{document}